\newcommand{\ie}{\emph{i.e.}}
\newcommand{\cf}{\emph{cf}}
\newcommand{\Real}{\mathbb{R}}
\newcommand{\Com}{\mathbb{C}}
\newcommand{\sii}{L^2}
\newcommand{\Dom}{\mathrm{Dom}}
\newcommand{\eps}{\varepsilon}
\newcommand{\Cut}{\mathcal{C}}
\newcommand{\dist}{\mathop{\mathrm{dist}}\nolimits}
\newcommand{\ds}{\displaystyle}
\newtheorem{Theorem}{Theorem}
\newtheorem{Proposition}{Proposition}
\newtheorem{Conjecture}{Conjecture}
\theoremstyle{definition}
\newtheorem{Remark}{Remark}
\definecolor{DarkGreen}{rgb}{0,0.5,0.1} 
\newcommand\soutD{\bgroup\markoverwith
{\textcolor{DarkGreen}{\rule[.5ex]{2pt}{1pt}}}\ULon}
\newcommand{\Hm}[1]{\leavevmode{\marginpar{\tiny%
$\hbox to 0mm{\hspace*{-0.5mm}$\leftarrow$\hss}%
\vcenter{\vrule depth 0.1mm height 0.1mm width \the\marginparwidth}%
\hbox to
0mm{\hss$\rightarrow$\hspace*{-0.5mm}}$\\\relax\raggedright #1}}}
\begin{document}
%
\title{The first Robin eigenvalue with negative boundary parameter}
\author{Pedro Freitas \ and \ David Krej\v{c}i\v{r}\'{\i}k}

\address{Department of Mathematics, Faculty of Human Kinetics \&
 Group of Mathematical Physics, 
 Universidade de Lisboa, Complexo Interdisciplinar, 
Av.~Prof.\ Gama Pinto~2, 
 P-1649-003 Lisboa, Portugal}

\email{freitas@cii.fc.ul.pt}

\address{
Department of Theoretical Physics,
Nuclear Physics Institute,
Academy of Sciences,
250\,68 \v{R}e\v{z}, Czech Republic
}
\email{krejcirik@ujf.cas.cz}

\date{15 May 2014}

\thanks{The research of the first author was partially supported 
by FCT's project PEst-OE/MAT/UI0208/2011.
The research of the second author was supported by 
RVO61389005 and the GACR grant No.\ 14-06818S}

\begin{abstract}
We give a counterexample to the long standing conjecture 
that the ball maximises the first eigenvalue of the Robin
eigenvalue problem with negative parameter among domains of the same volume. Furthermore, we show that the
conjecture holds in two dimensions provided that the boundary parameter is small. 
This is the first known example
within the class of isoperimetric spectral problems for the first eigenvalue of the Laplacian where the ball
is not an optimiser.
\end{abstract}

\maketitle
\section{Introduction}
%
Lord Rayleigh's book \emph{The theory of sound}~\cite{Rayleigh_1877} was the starting point for the study
of what are now called isoperimetric spectral inequalities, the most famous of which being the Rayleigh-Faber-Krahn
inequality. This states that the first Dirichlet eigenvalue of a domain $\Omega$ is always greater than or equal
to the first Dirichlet eigenvalue for a ball with the same volume. The first proof of this result was given by
Faber~\cite{Faber_1923} and Krahn~\cite{Krahn_1924} independently, nearly fifty years after Rayleigh's
conjectured it based on a few explicitly computed examples.

More generally, such inequalities relate the first (non-trivial) eigenvalue of the Laplace and other elliptic operators to the
volume of the domain and to the eigenvalue of one particular domain for which equality is attained. Since the appearance of Faber
and Krahn's work, these inequalities have been extended to the case of Neumann boundary conditions (Szeg\"{o}~\cite{Szego_1954}
and Weinberger~\cite{Weinberger_1956}, for two and higher dimensions, respectively) and Robin boundary conditions with
a positive boundary parameter (Bossel~\cite{Bossel_1986} and Daners~\cite{Daners_2006}, again for two and higher dimensions,
respectively).

A common feature of all these inequalities and one which is also shared by those for some higher order operators such as
the bi-Laplacian (at least in dimensions two and three 
-- see~\cite{Nadirashvili_1995} and~\cite{Ashbaugh-Benguria_1995}, respectively), 
is the fact that the optimal domain turns out to be the ball in all cases, 
thus mimicking the classical geometric isoperimetric inequality
between the surface area of a domain and its volume. It would thus be natural to expect that, in the only situation for the
Laplacian for which a result is still lacking, namely, the Robin problem with a negative boundary parameter, the optimal domain
should also be the ball. Indeed, this was conjectured by Bareket~\cite{Bareket_1977} as early as 1977, who
proved that the result holds within a class of \emph{nearly circular domains} and for a certain range of the boundary parameter.

To make matters more precise, given a domain $\Omega$ in $\Real^d$, 
we are interested in the problem
\begin{equation}
    \label{eq:robin}
    	\left\{
    \begin{aligned}
        -\Delta u &= \lambda u &\quad &\text{in $\Omega$} \,,\\
        \frac{\partial u}{\partial n}+\alpha u&=0 & &
        \text{on $\partial\Omega$} \,,
    \end{aligned}
\right.
\end{equation}
where~$n$ is the outer unit normal to $\Omega$ 
and the boundary parameter $\alpha$ is a real constant. While in the case
of positive $\alpha$ 
we have that the ball minimises the first eigenvalue 
$\lambda_{1}^\alpha(\Omega)$ of \eqref{eq:robin}
in a similar way to the classical Rayleigh-Faber-Krahn inequality,
here due to the fact that $\alpha$ is negative we now want to maximise this eigenvalue. 
More precisely, given a positive number~$\omega_0$,
we are interested in the problem
\begin{equation}
 \sup_{|\Omega|=\omega_{0}}\lambda_{1}^\alpha(\Omega)
  \,,
\end{equation}
where it should be emphasised that, at least \emph{a priori}, 
this also depends on the parameter $\alpha$ -- note that while the optimisers of
the first two Robin eigenvalues for positive $\alpha$ do not depend on this parameter, there is numerical evidence that this is not the
case for higher eigenvalues~\cite{Antunes-Freitas-Kennedy_2013}.
Bareket's conjecture may then be stated as follows.
\begin{Conjecture}\label{Conj}
Let $\alpha \leq 0$.
For any bounded smooth domain~$\Omega$,
\begin{equation}\label{Eq.Conj}
  \lambda_1^\alpha(\Omega) \leq \lambda_1^\alpha(B)
  \,,
\end{equation}
where~$B$ is a ball of the same volume as~$\Omega$.
\end{Conjecture}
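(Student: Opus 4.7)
My plan is to \emph{disprove} Conjecture~\ref{Conj} by exhibiting a spherical shell whose first Robin eigenvalue strictly exceeds that of the ball of the same volume once~$|\alpha|$ is sufficiently large. The guiding heuristic is that, as $\alpha\to-\infty$, the first eigenfunction concentrates exponentially on $\partial\Omega$, so $\lambda_1^\alpha \sim -\alpha^2$ to leading order and the next-to-leading correction is governed by the mean curvature of the most convex part of the boundary. The ball's boundary curvature is rigidly tied to its volume, but by piercing the ball with a concentric hole one enlarges the outer radius under the volume constraint, thereby decreasing the dominant curvature.

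Concretely, set $\Omega_{r_1,r_2} := \{x\in\Real^d : r_1 < |x| < r_2\}$ and let~$R$ denote the radius of the reference ball~$B_R$, so that equal-volume comparison reads $r_2^d - r_1^d = R^d$. Radial symmetry reduces problem \eqref{eq:robin} to an ODE whose solutions in the negative-eigenvalue regime $\lambda=-k^2$ are explicit linear combinations of the modified Bessel functions $I_\nu$ and $K_\nu$ with index $\nu=d/2-1$; imposing the Robin conditions then yields a single transcendental equation for $k$ on~$B_R$ and a $2\times 2$ determinantal equation on $\Omega_{r_1,r_2}$.

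I would then asymptotically analyse these equations as $\alpha\to-\infty$. Using $I_\nu(z),K_\nu(z) \sim (2\pi z)^{-1/2}e^{\pm z}(1+O(1/z))$ and keeping one extra order, a direct computation should produce
\begin{equation*}
\lambda_1^\alpha(B_R) = -\alpha^2 + \frac{d-1}{R}\,\alpha + o(\alpha),
\qquad
\lambda_1^\alpha(\Omega_{r_1,r_2}) = -\alpha^2 + \frac{d-1}{r_2}\,\alpha + o(\alpha),
\end{equation*}
where in the shell case the first eigenfunction concentrates near the outer sphere $\{|x|=r_2\}$. The volume constraint forces $r_2>R$, so $(d-1)/r_2 < (d-1)/R$; since $\alpha<0$, this inequality reverses to $(d-1)\alpha/r_2 > (d-1)\alpha/R$, meaning the shell's first-order correction is strictly less negative than the ball's. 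Hence $\lambda_1^\alpha(\Omega_{r_1,r_2}) > \lambda_1^\alpha(B_R)$ for every $|\alpha|$ sufficiently large, in direct contradiction with~\eqref{Eq.Conj}.

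The main obstacle is to verify rigorously that the outer-boundary mode is truly the \emph{first} (i.e.\ smallest) eigenvalue of the shell rather than merely a low-lying one. The inner sphere has outward mean curvature $-(d-1)/r_1$, and a parallel computation produces an inner-boundary-concentrated mode with asymptotics $-\alpha^2 - \frac{d-1}{r_1}\,\alpha + o(\alpha)$, which (by the sign reversal) lies strictly above the outer mode; nevertheless, separating these two families rigorously will require either a careful analysis of the smallest root of the $2\times 2$ determinantal equation or, equivalently, a trial-function upper bound combined with exponential-decay estimates that localise each contribution near its own boundary component.
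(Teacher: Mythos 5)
Your proposal is correct and follows essentially the same route as the paper: disprove the conjecture by comparing a spherical shell with an equal-volume ball, reduce to modified Bessel functions with index $\nu=(d-2)/2$, extract the two-term asymptotics $\lambda_1^\alpha\sim-\alpha^2+\frac{d-1}{r_2}\alpha$ (shell) versus $-\alpha^2+\frac{d-1}{R}\alpha$ (ball), and use $r_2>R$ together with $\alpha<0$ to reverse the inequality for $|\alpha|$ large. You also correctly flag the one genuine technical point — establishing that the outer-boundary mode is the lowest root of the determinantal equation rather than the inner-boundary one — which the paper resolves by the multi-step asymptotic bootstrap (Steps~0--5) that isolates the two branches $k+\alpha-\frac{d-1}{2r_2}=o(1)$ and $k+\alpha+\frac{d-1}{2r_1}=o(1)$ and identifies the former as the larger $k$.
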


In the intervening years since Bareket's paper not much progress has been made, although the conjecture has been
revived recently in both~\cite{Brock-Daners_2007} and~\cite{Daners_2013}, 
while Ferone, Nitsch and Trombetti have shown that it holds
within the ``class of Lipschitz sets which are `close'
to a ball in a Hausdorff metric sense''~\cite{Ferone-Nitsch-Trombetti}.

The purpose of this paper is twofold. On the one hand, we provide a counterexample showing that the conjecture does not hold
for general domains. This is done by determining a two-term asymptotic expansion in $\alpha$ for both balls and spherical shells,
yielding that the latter must provide a larger value than the former for sufficiently large (negative) $\alpha$ and may
be stated as follows.
\begin{Theorem}[General domains]\label{mainthm1}
Conjecture~\ref{Conj} does not hold among the class of bounded domains in $\Real^d$ ($d\geq2$); more precisely,~\eqref{Eq.Conj}~is
violated whenever~$\Omega$ is a spherical shell of the same volume as the ball~$B$
and for sufficiently large negative~$\alpha$.
\end{Theorem}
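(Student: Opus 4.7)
\emph{Proof proposal.} The plan is to derive two-term asymptotic expansions, as $\alpha\to-\infty$, of the form
\begin{equation*}
\lambda_1^\alpha(B_R)=-\alpha^2+\frac{(d-1)\alpha}{R}+O(1)
\end{equation*}
for a ball $B_R$ of radius $R$, together with the analogous expansion (with $R$ replaced by the \emph{outer} radius) for a spherical shell; and then to exploit the equal-volume constraint, which forces the outer radius of the shell to strictly exceed the radius of the equivolume ball, to read off the claimed violation of~\eqref{Eq.Conj}.

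Both expansions proceed by separation of variables. The standard spherical-harmonic decomposition reduces the problem to a family of radial ODEs, and the first eigenfunction is radial since non-radial modes only add a positive centrifugal term $k(k+d-2)/\rho^{2}$ to the effective potential. Writing $u(x)=\varphi(|x|)$ and $\lambda=-\beta^{2}$ with $\beta>0$, the radial equation $\varphi''+\tfrac{d-1}{\rho}\varphi'=\beta^{2}\varphi$ has fundamental solutions $\rho^{-\nu}I_\nu(\beta\rho)$ and $\rho^{-\nu}K_\nu(\beta\rho)$ with $\nu=(d-2)/2$. For the ball, regularity at the origin retains only the $I_\nu$-mode; inserting the large-argument expansion $I_\nu(x)\sim e^{x}(2\pi x)^{-1/2}\bigl(1-(4\nu^{2}-1)/(8x)+\cdots\bigr)$ into the Robin condition at $\rho=R$ and solving the resulting transcendental equation by an implicit-function argument gives $\beta=|\alpha|+\tfrac{d-1}{2R}+o(1)$, and squaring delivers the expansion above. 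For the shell of inner radius $r$ and outer radius $R$, both Bessel modes are present; the two Robin conditions form a homogeneous $2\times 2$ linear system whose determinant must vanish. Since the cross-products $I_\nu(\beta R)K_\nu(\beta r)$ dominate $K_\nu(\beta R)I_\nu(\beta r)$ by a factor $e^{2\beta(R-r)}$, the vanishing of the determinant forces the outer-boundary bracket to be exponentially small; this selects the branch $\beta=|\alpha|+\tfrac{d-1}{2R}+o(1)$, producing the same sub-leading correction as for a ball of the outer radius $R$. The alternative (inner-boundary) branch yields the strictly larger value $\lambda\sim-\alpha^{2}-(d-1)\alpha/r$ and so does not correspond to the first eigenvalue.

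With both expansions in hand, the volume constraint $R_B^{d}=R^{d}-r^{d}$ forces $R>R_B$ and hence $1/R_B>1/R$, so
\begin{equation*}
\lambda_1^\alpha(A_{r,R})-\lambda_1^\alpha(B_{R_B})=(d-1)\,|\alpha|\Bigl(\tfrac{1}{R_B}-\tfrac{1}{R}\Bigr)+O(1),
\end{equation*}
which is strictly positive and diverges as $\alpha\to-\infty$; this violates~\eqref{Eq.Conj}. The principal obstacle is extracting the sub-leading term with remainder genuinely $o(|\alpha|)$: since both eigenvalues diverge like $-\alpha^{2}$, any looser bound on the remainder would swamp the $O(|\alpha|)$ discrepancy that drives the whole argument. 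A secondary point of care is to confirm that the outer-boundary branch identified for the shell is indeed the first eigenvalue branch, which follows from the direct comparison of the two branches exhibited above.
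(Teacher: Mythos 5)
Your proposal follows essentially the same route as the paper: reduce to the radial Bessel ODE, form the $2\times2$ determinant condition from the two Robin boundary conditions, observe that the cross term is suppressed by $e^{-2k(r_2-r_1)}$ so that the first-eigenvalue branch is governed by the outer-boundary factor, extract the two-term asymptotics $\lambda_1^\alpha=-\alpha^2+\frac{(d-1)\alpha}{r_2}+o(\alpha)$, and finish by noting that equal volume forces the shell's outer radius to exceed the ball's radius. The only place your sketch is lighter than the paper is in the rigorous extraction of the sub-leading term (the paper does this via a careful five-step bootstrap in \S\ref{Sec.as}), a gap you correctly flag yourself as ``the principal obstacle''; the key ideas and structure match.
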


On the other hand, we show that in the two-dimensional case the ball is still the maximiser
provided that $\alpha$ is negative and sufficiently small in absolute value.

\begin{Theorem}[Planar domains]\label{mainthm2}
For bounded planar domains of class~$C^2$ and fixed area, 
there exists a negative number~$\alpha_{*}$, 
depending only on the area, such that~\eqref{Eq.Conj} holds for all
$\alpha \in [\alpha_{*},0]$.
\end{Theorem}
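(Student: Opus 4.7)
The plan is to compare $\lambda_1^\alpha(\Omega)$ with $\lambda_1^\alpha(B)$ via a two-term asymptotic expansion in $\alpha$ around $\alpha = 0$. Since $\lambda_1^0 = 0$ in both cases (Neumann problem) and the associated eigenfunction degenerates to a constant, this perturbative regime is the natural one to exploit when $|\alpha|$ is small.

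First I would derive an upper bound on $\lambda_1^\alpha(\Omega)$ by substituting the trial function $u = 1 + \alpha w_\Omega$ into the Rayleigh quotient, where $w_\Omega$ is the unique solution of the compatible Neumann problem
\[
-\Delta w_\Omega = \frac{|\partial\Omega|}{|\Omega|}\ \text{in}\ \Omega,\qquad \frac{\partial w_\Omega}{\partial n} = -1\ \text{on}\ \partial\Omega,\qquad \int_\Omega w_\Omega = 0.
\]
A direct expansion of the quotient yields
\[
\lambda_1^\alpha(\Omega) \leq \frac{|\partial\Omega|}{|\Omega|}\,\alpha + \frac{1}{|\Omega|}\int_{\partial\Omega} w_\Omega\,\alpha^2 + R(\Omega,\alpha),
\]
with remainder $R(\Omega,\alpha)=O(\alpha^3)$. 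Performing the analogous calculation on the disk $B$, where $w_B(r) = -r^2/(2R) + R/4$ is explicit, the trial function happens to saturate the Rayleigh quotient at the first two orders, giving the matching exact expansion
\[
\lambda_1^\alpha(B) = \frac{|\partial B|}{|\Omega|}\,\alpha - \frac{1}{2}\,\alpha^2 + O(\alpha^3).
\]

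Comparing the two expansions, the inequality of Theorem~\ref{mainthm2} will follow provided both correction terms point the right way. The classical planar isoperimetric inequality $|\partial\Omega| \geq |\partial B|$ together with $\alpha < 0$ makes the coefficient of $\alpha$ in the difference nonpositive. Using the Green identity $\int_{\partial\Omega} w_\Omega = -\int_\Omega |\nabla w_\Omega|^2$, the coefficient of $\alpha^2$ is nonpositive precisely when
\[
\int_\Omega |\nabla w_\Omega|^2 \geq \tfrac{1}{2}\,|\Omega|,
\]
a Saint-Venant-type isoperimetric inequality which is saturated by the disk and which I would establish as an independent lemma, for instance via symmetrisation.

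The hard part will be twofold. The first difficulty is to prove the auxiliary Saint-Venant-type inequality above, which is the analytic heart of the argument and is itself a nontrivial isoperimetric statement about a Neumann-torsion functional. The second difficulty is to control the remainder $R(\Omega,\alpha)$ uniformly across all $C^2$ planar domains of prescribed area, which does not come for free since the interior and boundary integrals of $w_\Omega$ appearing in the remainder depend on the geometry of $\Omega$. A natural dichotomy handles this: when the isoperimetric deficit $|\partial\Omega|-|\partial B|$ exceeds a threshold proportional to $|\alpha|$, the crude trial function $u\equiv 1$ already gives $\lambda_1^\alpha(\Omega)\leq\alpha|\partial\Omega|/|\Omega|$ and the linear gap dominates any $\alpha^2$ correction; in the complementary near-disk regime, a Fuglede-type quantitative stability of the isoperimetric inequality, or alternatively the Ferone-Nitsch-Trombetti estimate cited in the introduction, would allow the remainder to be absorbed and the desired inequality closed uniformly for all $\alpha \in [\alpha_*,0]$ with $\alpha_*$ depending only on $|\Omega|$.
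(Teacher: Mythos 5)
Your approach is genuinely different from the paper's, and it contains a fatal error at its analytic core. The auxiliary ``Saint-Venant-type'' inequality $\int_\Omega |\nabla w_\Omega|^2 \geq \tfrac12 |\Omega|$ is \emph{false}. For the thin annulus $A_{r_1,r_1(1+\eta)}$ an explicit computation of the radial $w_\Omega$ gives
\[
\frac{1}{|\Omega|}\int_\Omega |\nabla w_\Omega|^2 \;\longrightarrow\; \frac{1}{3} \qquad (\eta\to 0),
\]
and the same limiting value $\tfrac13 < \tfrac12$ appears for a long thin rectangle with smoothed corners (there $w_\Omega \approx -y^2/(2a)+\mathrm{const}$ in the bulk, so $\tfrac{1}{2a}\int_{-a}^{a}(y/a)^2\,dy=\tfrac13$). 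Thus the disk does \emph{not} minimise this Neumann-torsion energy among domains of fixed area, and the coefficient of $\alpha^2$ in $\lambda_1^\alpha(\Omega)-\lambda_1^\alpha(B)$ can be strictly positive. Your plan of making the linear and quadratic coefficients separately nonpositive, with the quadratic case handled ``via symmetrisation,'' cannot succeed; for elongated domains the first-order isoperimetric gap is what dominates, and this must be exploited directly rather than through a pointwise sign condition on the second coefficient. This failure is in fact to be expected in light of Theorem~\ref{mainthm1}: since the ball loses for large $|\alpha|$, one should not expect every Taylor coefficient at $\alpha=0$ to favour it.

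The concluding dichotomy does not cleanly repair this, and in any case it addresses the remainder rather than the sign of the $\alpha^2$ term. In the near-disk branch, a small isoperimetric deficit yields only $L^1$-type (Fraenkel asymmetry) proximity to a ball; it controls neither the curvature of $\partial\Omega$, nor $\|\nabla w_\Omega\|_{L^2}$, nor the cubic remainder uniformly, and it does not place $\Omega$ in the Hausdorff neighbourhood required by the Ferone--Nitsch--Trombetti theorem with an $\alpha$-threshold depending only on area — which is precisely the uniformity that Theorem~\ref{mainthm2} asserts. (And if the FNT result could be applied in that branch with an area-only threshold, the whole perturbation expansion would be superfluous.) The paper sidesteps these difficulties with the Payne--Weinberger parallel-coordinate reduction: Theorem~\ref{Thm.better} bounds $\lambda_1^\alpha(\Omega)$ for \emph{every} $\alpha\leq 0$ by the first Neumann--Robin eigenvalue of an explicit annulus whose radii depend only on $A_0$ and the outer perimeter $L_0$, so the uniformity question becomes a concrete one-parameter problem in $\eps$, settled by concavity of the eigenvalue curves (for $\eps\to\infty$) and an explicit Bessel-function analysis (for $\eps\to 0$). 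That finite-dimensional reduction is what your scheme is missing.
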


The fact that the ball stops being the maximiser
for a critical value of~$\alpha$ is quite surprising 
in view of the isoperimetric spectral properties of the similar
problems described above and it is, to the best of our knowledge, 
the first problem of its type for which this happens. 
Based on Theorems~\ref{mainthm1} and~\ref{mainthm2} we conjecture, 
however, that the global maximiser is still symmetric 
for reflections with respect to all hyperplanes, 
being a ball for small (absolute) values of~$\alpha$ 
and bifurcating to annuli at a critical value of $\alpha$
(depending on the volume).

The structure of the paper is as follows. 
In the next section we lay out the problem setting. 
Although there have been many papers dealing with the asymptotic 
behaviour of the eigenvalues of problem~\eqref{eq:robin} as the
parameter~$\alpha$ becomes large and negative, 
such as~\cite{Daners-Kennedy_2006,Exner-Minakov-Parnovski,
Lacey-Ockendon-Sabina_1998,Levitin-Parnovski_2008,Pankrashkin_2013}, 
none of the existing results in the literature apply directly to our problem 
in its full generality. 
For completeness, we thus derive the asymptotics necessary 
to obtain our counter-example in Section~\ref{Sec.as}.
Finally, in the last section we show that, 
for small negative~$\alpha$ and within the context of planar 
domains, the conjecture holds. 
This is done in two parts. 
By applying the method of parallel coordinates, we
first compare such domains with
annuli with Robin and Neumann boundary conditions 
in the outer and inner circles, respectively, 
thus allowing us to parametrise these domains (for this purpose) 
by their area and outer perimeter alone. 
Then we compare such annuli with the Robin
disk, proving the desired result.

\section{Preliminaries}
%
Let~$\Omega$ be a domain (\ie\ open connected set) in~$\Real^d$ with $d \geq 1$. 
For simplicity, let us assume that~$\Omega$ is bounded and Lipschitz,
so that the vector field~$n$ is defined almost everywhere in~$\partial\Omega$
and the boundary traces 
$W^{1,2}(\Omega) \hookrightarrow \sii(\partial\Omega)$ exist.
Let $\alpha \in \Real$.
We understand~\eqref{eq:robin} as a spectral problem for
the self-adjoint operator $-\Delta_\alpha^\Omega$ in $\sii(\Omega)$
associated with the closed quadratic form
\begin{equation}\label{form}
  Q_\alpha^\Omega[u] := \|\nabla u\|_{\sii(\Omega)}^2 
  +\alpha\;\!\|u\|_{\sii(\partial\Omega)}^2
  \,, \qquad 
  \Dom(Q_\alpha^\Omega) := W^{1,2}(\Omega)
  \,.
\end{equation}
If~$\Omega$ is of class~$C^2$, then 
$
  \Dom(-\Delta_\alpha^\Omega)
$
consists of functions $u \in W^{2,2}(\Omega)$
which satisfy the Robin boundary conditions 
of~\eqref{eq:robin} in the sense of traces
and the boundary value problem~\eqref{eq:robin}
can be considered in a classical setting.
Under our minimal regularity assumptions,
it still makes sense to define the lowest
point in the spectrum of $-\Delta_\alpha^\Omega$
by the variational formula
\begin{equation}\label{Rayleigh}
  \lambda_1^\alpha(\Omega)
  := \inf_{\stackrel[u\not=0]{}{u \in W^{1,2}(\Omega)}}
  \frac{Q_\alpha^\Omega[u]}
  {\,\|u\|_{\sii(\Omega)}^2}
  \,.
\end{equation}
Since the embedding $W^{1,2}(\Omega) \hookrightarrow \sii(\Omega)$ is compact,
we know that $\lambda_1^\alpha(\Omega)$ is a discrete eigenvalue
and the infimum is achieved by a function $u_1^\alpha \in W^{1,2}(\Omega)$.

Since $\lambda_1^\alpha(\Omega)$ is the lowest eigenvalue of $-\Delta_\alpha^\Omega$,
it is simple and the corresponding eigenfunction~$u_1^\alpha$
can be chosen to be positive in~$\Omega$.
We further normalise~$u_1^\alpha$ to have unit $\sii(\Omega)$ norm.
It is straightforward to verify that $\{Q_\alpha^\Omega\}_{\alpha\in\Real}$
is a holomorphic family of forms of type~(a) in the sense of Kato 
\cite[Sec.~VII.4]{Kato}. 
In fact, the boundary term in~\eqref{form} 
is relatively bounded with respect to the Neumann form~$Q_0^\Omega$,
with the relative bound equal to zero,
so that \cite[Thm.~4.8]{Kato} applies.
Consequently, $-\Delta_\alpha^\Omega$ forms 
a self-adjoint holomorphic family of operators of type~(B).
Because of the simplicity, it follows that 
$\alpha \mapsto \lambda_1^\alpha(\Omega)$ 
and $\alpha \mapsto u_1^\alpha$ are analytic functions on~$\Real$
(the latter in the topology of $W^{1,2}(\Omega)$).

Using a constant test function 
in the variational characterisation~\eqref{Rayleigh},
we get
\begin{equation}\label{bound}
  \lambda_1^\alpha(\Omega) \leq 
  \alpha \, \frac{\mathcal{H}^{d-1}(\partial\Omega)}{|\Omega|}
  \,.
\end{equation}
Here $|\cdot|$ stands for 
the $d$-dimensional Lebesgue measure
and $\mathcal{H}^{d-1}(\cdot)$
denotes the $(d-1)$-dimensional Hausdorff measure.
It follows that $\lambda_1^\alpha(\Omega)$ is negative whenever $\alpha < 0$
and 
\begin{equation}
  \lim_{\alpha \to -\infty} \lambda_1^\alpha(\Omega) = -\infty
  \,.
\end{equation}
%

\section{Asymptotics of spherical shells}\label{Sec.as}
%
In this section we assume that $\Omega \subset \Real^d$ is 
the domain enclosed by two concentric spheres of radii $r_1 < r_2$
in any dimension $d \geq 2$ ($\Omega$ would not be connected in $d=1$).
More specifically, we define 
$$
  A_{r_1,r_2} := B_{r_2} \setminus \overline{B_{r_1}}
  \,, \qquad \mbox{where} \qquad
  B_r := \{x\in\Real^d :\, |x|<r \}
$$
is an open ball. In fact, since $B_0=\varnothing$,
we may write $B_r = A_{0,r}$ 
and think of the ball as a special case of the spherical shell.
Of course, $A_{r_1,r_2}$ is an annulus if $d=2$.
We are interested in the asymptotics of $\lambda_1^\alpha(A_{r_1,r_2})$
as $\alpha \to -\infty$.
We may thus assume that~$\alpha$ is negative and set 
$$
  k:=\sqrt{-\lambda_1^\alpha(A_{r_1,r_2})} \,>\, 0
  \,.
$$ 
Recall also that the dependence of~$k$ on~$\alpha$ is smooth.

\smallskip 

\subsection*{Step 0}
We \emph{a priori} know that~$k$ tends to $+\infty$ as $\alpha \to -\infty$.
Indeed, in the present situation of spherical shells,
bound~\eqref{bound} reduces to
\begin{equation*}
  k^2 \geq -\alpha \, d \, \frac{r_2^{d-1}-r_1^{d-1}}{r_2^d-r_1^d}
\end{equation*}
and $\alpha$ is assumed to be negative.
In particular,
\begin{equation}\label{step0}
  \frac{\alpha}{k^2} = \mathcal{O}(1)
\end{equation}
as $\alpha \to -\infty$.
Henceforth, we use the big~$\mathcal{O}$ and small~$o$ notations
without further specifying the asymptotics regime $\alpha \to -\infty$.

\subsection*{From a PDE to an algebraic equation} 
Since the domain~$A_{r_1,r_2}$ is smooth,
\eqref{eq:robin}~is a classical boundary value problem. 
Because of the rotational symmetry of~$A_{r_1,r_2}$
and since $\lambda_1^\alpha(A_{r_1,r_2})$ is simple,
the corresponding eigenfunction~$u_1^\alpha$ 
is necessarily rotationally symmetric.
The eigenpair may thus be determined by solving 
\begin{equation}\label{elliptic,annulus}
\left\{
\begin{aligned}  
  -r^{-(d-1)} [r^{d-1} \psi'(r)]' &= -k^2 \psi(r) \,,
  && r \in [r_1,r_2] \,,
  \\
  -\psi'(r_1)+\alpha\psi(r_1) &= 0 \,,
  \\
  \psi'(r_2) + \alpha \;\! \psi(r_2) &=0 \,.
\end{aligned}
\right.
\end{equation}
The general solution of the differential equation in~\eqref{elliptic,annulus}
is given by
\begin{equation}\label{Bessel.torus}
  \psi(r) = r^{-\nu}
  \left[C_1 K_{\nu}(kr) + C_2 I_{\nu}(kr)\right]
  \,, \qquad
  C_1, C_2 \in \Com
  \,,
\end{equation}
where $K_\nu, I_\nu$ are modified Bessel functions 
\cite[Sec.~9.6]{Abramowitz-Stegun} and 
\begin{equation}\label{nu}
  \nu := \frac{d-2}{2}
  \,.
\end{equation}
Requiring $\psi$ to satisfy the boundary conditions 
leads us to the homogeneous algebraic system
\begin{equation}\label{homo}
\begin{pmatrix}
  m_{11} & m_{12}
  \\
  m_{21} & m_{22} 
\end{pmatrix}
\begin{pmatrix}
  C_1 
  \\  
  C_2
\end{pmatrix}
= 
\begin{pmatrix}
  0 
  \\  
  0
\end{pmatrix}
  \,, 
\end{equation}
with the entries
\begin{align*}
  m_{11} &:= -k K_{\nu}'(k r_1) + \mbox{$\frac{\nu}{r_1}$} K_{\nu}(k r_1)
  + \alpha K_{\nu}(k r_1)
  \,, \\
  m_{12} &:= -k I_{\nu}'(k r_1) + \mbox{$\frac{\nu}{r_1}$} I_{\nu}(k r_1)
  + \alpha I_{\nu}(k r_1)
  \,, \\
  m_{21} &:= k K_{\nu}'(k r_2) - \mbox{$\frac{\nu}{r_2}$} K_{\nu}(k r_2) 
  + \alpha K_{\nu}(k r_2) 
  \,, \\
  m_{22} &:= k I_{\nu}'(k r_2) - \mbox{$\frac{\nu}{r_2}$} I_{\nu}(k r_2)
  + \alpha I_{\nu}(k r_2) 
  \,.
\end{align*}
Non-trivial solutions of~\eqref{homo} 
correspond to the values of~$k$ for which the determinant
of the square matrix vanishes, \ie
\begin{equation}\label{lines}
  F(k,\alpha)  := m_{11} m_{22} - m_{21} m_{12} = 0
  \,,
\end{equation}
yielding an implicit equation for~$k$ as a function of~$\alpha$.

As a matter of fact, any eigenvalue corresponding to a rotationally symmetric 
eigenfunction in the Robin shell is determined as a solution 
$k=k(\alpha)$ of~\eqref{lines}.
We are interested in the largest of these solutions,
which determines $\lambda_1^\alpha(A_{r_1,r_2})$. 

\subsection*{Expanding Bessel functions} 
By \cite[Sec.~9.6.29]{Abramowitz-Stegun},
the derivative of Bessel functions is again 
a combination of Bessel functions of different orders, 
\begin{equation}\label{Bessel.derivative}
  I_{\nu}' = \frac{1}{2} \, (I_{\nu-1}+I_{\nu+1})
  \,, \qquad
  K_{\nu}' = -\frac{1}{2} \, (K_{\nu-1}+K_{\nu+1})
  \,.
\end{equation}
By \cite[Sec.~9.7.1]{Abramowitz-Stegun},
for Bessel functions of arbitrary order~$\mu$,
we have the factorisation
\begin{equation}\label{as0}
  I_\mu(z) = \frac{1}{\sqrt{2\pi z}} \, e^{z} \, \tilde{I}_\mu(z)
  \,, \qquad
  K_\mu(z) = \sqrt{\frac{\pi}{2 z}} \, e^{-z} \, \tilde{K}_\mu(z)
  \,,
\end{equation}
where $\tilde{I}_\mu$ and $\tilde{K}_\mu$ admit the asymptotic expansions
\begin{equation}\label{as}
\begin{aligned}
  \tilde{I}_\mu(z) &=
  1 - \frac{4\mu^2-1}{8z} 
  + \frac{(4\mu^2-1)(4\mu^2-9)}{2(8z)^2}
  + \mathcal{O}(z^{-3})
  \,,
  \\
  \tilde{K}_\mu(z) &= 
  1 + \frac{4\mu^2-1}{8z} 
  + \frac{(4\mu^2-1)(4\mu^2-9)}{2(8z)^2}
  + \mathcal{O}(z^{-3})
  \,,
\end{aligned}
\end{equation}
as $z \to +\infty$. 
Using~\eqref{Bessel.derivative} and~\eqref{as0}, 
we can rewrite the implicit equation~\eqref{lines} as
\begin{equation}\label{lines.new}
  \tilde{m}_{11} \tilde{m}_{22}
  - e^{-2k(r_2-r_1)} \, \tilde{m}_{21} \tilde{m}_{12}
  = 0
  \,,
\end{equation}
where
\begin{align*}
  \tilde{m}_{11} &:= \mbox{$\frac{1}{2}$} \, k  
  \big[\tilde{K}_{\nu-1}(k r_1)+\tilde{K}_{\nu+1}(k r_1)\big]
  + \mbox{$\frac{\nu}{r_1}$} \tilde{K}_{\nu}(k r_1)
  + \alpha \tilde{K}_{\nu}(k r_1)
  \,, \\
  \tilde{m}_{12} &:= -\mbox{$\frac{1}{2}$} \, k 
  \big[\tilde{I}_{\nu-1}(k r_1)+\tilde{I}_{\nu+1}(k r_1)\big]
  + \mbox{$\frac{\nu}{r_1}$} \tilde{I}_{\nu}(k r_1)
  + \alpha \tilde{I}_{\nu}(k r_1)
  \,, \\
  \tilde{m}_{21} &:= - \mbox{$\frac{1}{2}$} \, k 
  \big[\tilde{K}_{\nu-1}(k r_2)+\tilde{K}_{\nu+1}(k r_2)\big]
  - \mbox{$\frac{\nu}{r_2}$} \tilde{K}_{\nu}(k r_2) 
  + \alpha \tilde{K}_{\nu}(k r_2) 
  \,, \\
  \tilde{m}_{22} &:= \mbox{$\frac{1}{2}$} \, k 
  \big[\tilde{I}_{\nu-1}(k r_2)+\tilde{I}_{\nu+1}(k r_2)\big]
  - \mbox{$\frac{\nu}{r_2}$} \tilde{I}_{\nu}(k r_2)
  + \alpha \tilde{I}_{\nu}(k r_2) 
  \,.
\end{align*}
It follows from~\eqref{step0} and~\eqref{as}
that the second term on the left hand side of~\eqref{lines.new} 
vanishes in the limit $\alpha \to -\infty$.
Hence, $k=k(\alpha)$ satisfies 
\begin{equation}\label{limit}
  \lim_{\alpha \to -\infty} f(k,\alpha) = 0
\end{equation}
with 
$
  f(k,\alpha) 
  := \tilde{m}_{11} \tilde{m}_{22}
$.
Using~\eqref{as}, it is tedious but straightforward to  
verify the expansion
\begin{equation}\label{asf}
\begin{aligned}
  f(k,\alpha) = \ & 
  k^2 \Bigg[
  1 + \frac{d^2-4d+7}{8k} \left(\frac{1}{r_1}-\frac{1}{r_2}\right)
  - \frac{(d^2-4d+7)^2}{64 k^2 r_1 r_2}
  \\
  & \qquad + \frac{d^4-8d^3+38d^2-88d+57}{128 k^2}
  \left(\frac{1}{r_1^2}+\frac{1}{r_2^2}\right)
  +\mathcal{O}(k^{-3})
  \Bigg]
  \\
  & + 2k\alpha \Bigg[
  1 + \frac{d^2-4d+5}{8k} \left(\frac{1}{r_1}-\frac{1}{r_2}\right)
  \\
  & \qquad + \frac{(d^2-4d+7)(d^2-4d+3)}{128 k^2}
  \left(\frac{1}{r_1}-\frac{1}{r_2}\right)^2
  +\mathcal{O}(k^{-3})
  \Bigg]
  \\
  & + \alpha^2 \Bigg[
  1 + \frac{d^2-4d+3}{8k} \left(\frac{1}{r_1}-\frac{1}{r_2}\right)
  - \frac{(d^2-4d+3)^2}{64 k^2 r_1 r_2}
  \\
  & \qquad + \frac{d^4-8d^3+14d^2+8d-15}{128 k^2}
  \left(\frac{1}{r_1^2}+\frac{1}{r_2^2}\right)
  +\mathcal{O}(k^{-3})
  \Bigg]
  \\ 
  & + \nu k \Bigg[
  \frac{1}{r_1}-\frac{1}{r_2} - \frac{d^2-4d+7}{4 k r_1 r_2}
  \\
  & \qquad + \frac{d^2-4d+3}{8k} \left(\frac{1}{r_1^2}+\frac{1}{r_2^2}\right)
  +\mathcal{O}(k^{-2})
  \Bigg]
  \\ 
  & - \frac{\nu^2}{r_1 r_2} \left[1+\mathcal{O}(k^{-1})\right]
  \\
  & + \nu \alpha \left[
  \frac{1}{r_1}-\frac{1}{r_2} 
  + \frac{d^2-4d+3}{8 k} \left(\frac{1}{r_1}-\frac{1}{r_2}\right)^2
  +\mathcal{O}(k^{-2})
  \right]
  .
\end{aligned}
\end{equation}
Now we proceed in five steps.

\subsection*{Step 1} 
From~\eqref{limit}, we know that
$$
  \lim_{\alpha \to -\infty} \frac{f(k,\alpha)}{k^4} = 0
  \,.
$$ 
On the other hand, the asymptotics~\eqref{asf} 
together with~\eqref{step0} gives
$$
  \frac{f(k,\alpha)}{k^4} 
  = \left(\frac{\alpha}{k^2}\right)^2 + \mathcal{O}(k^{-1})
  \,.
$$
Hence, we conclude with an improvement upon~\eqref{step0},
\begin{equation}\label{step1}
  \frac{\alpha}{k^2} = o(1)
  \,.
\end{equation}

\subsection*{Step 2} 
We continue with dividing~$f$ by~$k^3$
and use the previous result~\eqref{step1}
to conclude from~\eqref{asf}
$$
  \frac{f(k,\alpha)}{k^3} 
  = \frac{\alpha^2}{k^3} + o(1)
  \,.
$$
Recalling~\eqref{limit}, we thus get
\begin{equation}\label{step2}
  \frac{\alpha^2}{k^3} = o(1)
  \,.
\end{equation}

\subsection*{Step 3} 
In the same vein, we conclude from
$$
  \frac{f(k,\alpha)}{k^2} 
  = \left(1+\frac{\alpha}{k}\right)^2 + o(1)
$$
and~\eqref{limit} the asymptotics
\begin{equation}\label{step3}
  1+\frac{\alpha}{k} = o(1)
  \,.
\end{equation}

\subsection*{Step 4} 
Using the previous results, we get from~\eqref{asf}
$$
  \frac{f(k,\alpha)}{k} 
  = \frac{(k+\alpha)^2}{k} + o(1)
  \,,
$$
so that~\eqref{limit} yields
\begin{equation}\label{step4}
  \frac{(k+\alpha)^2}{k} = o(1)
  \,.
\end{equation}

\subsection*{Step 5} 
Finally, using the previously established results,
a tedious but straightforward computation enables us to 
derive the following asymptotics
$$
\begin{aligned}
  f(k,\alpha)
  &= (k+\alpha)^2 + \frac{d-1}{2} (k+\alpha) \left(\frac{1}{r_1}-\frac{1}{r_2}\right)
  - \frac{(d-1)^2}{4 r_1 r_2} 
  + o(1)
  \\
  &= \left(k+\alpha-\frac{d-1}{2 r_2} \right) 
  \left(k+\alpha+\frac{d-1}{2 r_1} \right)
  + o(1)
  \,.
\end{aligned}
$$
From~\eqref{limit}, we thus eventually obtain
\begin{equation}\label{step5}
  k+\alpha-\frac{d-1}{2r_2} = o(1)
  \qquad \mbox{or} \qquad
  k+\alpha+\frac{d-1}{2r_1} = o(1)
  \,.
\end{equation}
Here the first asymptotics corresponds to 
the lowest eigenvalue of the Robin Laplacian
(because~$k$ will be larger in this limit),
while the second corresponds to another 
rotationally symmetric eigenfunction.

\medskip
Summing up, we have 
\begin{Theorem}\label{Thm.as}
Given positive numbers $r_1 < r_2$ and $r$,
we have the eigenvalue asymptotics
\begin{align}
  \lambda_1^\alpha(A_{r_1,r_2}) 
  &= -\alpha^2 + \frac{d-1}{r_2} \, \alpha + o(\alpha)
  && \mbox{(spherical shell)} 
  \,,
  \label{as.anulus} \\
  \lambda_1^\alpha(B_r) 
  &=
  -\alpha^2 + \frac{d-1}{r} \, \alpha + o(\alpha)
  && \mbox{(ball)} 
  \,.
  \label{as.ball}
\end{align}
as $\alpha \to -\infty$.
\end{Theorem}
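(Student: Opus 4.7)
The plan is to derive both asymptotics from a single implicit equation for $k:=\sqrt{-\lambda_1^\alpha}$ coming from the radial ODE, and then extract the leading behaviour through an iterative refinement. First, I would exploit the rotational symmetry of $A_{r_1,r_2}$ together with the simplicity of $\lambda_1^\alpha$: the ground state must be radial, so~\eqref{eq:robin} reduces to the ODE problem~\eqref{elliptic,annulus}. Its general solution is a linear combination of the modified Bessel functions $r^{-\nu}K_\nu(kr)$ and $r^{-\nu}I_\nu(kr)$, and imposing the two Robin conditions yields the homogeneous system~\eqref{homo}. The vanishing of its determinant gives the implicit equation $F(k,\alpha)=0$, whose largest root determines $\lambda_1^\alpha(A_{r_1,r_2})$.

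Next, I would use the factorisations $I_\nu(z)=(2\pi z)^{-1/2}e^{z}\tilde{I}_\nu(z)$ and $K_\nu(z)=(\pi/(2z))^{1/2}e^{-z}\tilde{K}_\nu(z)$ to peel off the exponentials and rewrite $F=0$ as~\eqref{lines.new}. Since $k\to\infty$ as $\alpha\to-\infty$ by~\eqref{bound}, the factor $e^{-2k(r_2-r_1)}$ makes the cross term super-exponentially small, so the equation collapses to $f(k,\alpha):=\tilde{m}_{11}\tilde{m}_{22}=o(1)$, and the large-argument expansions~\eqref{as} together with~\eqref{Bessel.derivative} produce the polynomial-in-$k^{-1}$ expression~\eqref{asf} for $f$.

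The heart of the argument is then a bootstrap on~\eqref{asf}. Starting from the a priori bound $\alpha/k^2=\mathcal{O}(1)$ of~\eqref{step0}, I would divide $f$ by $k^4$, $k^3$, $k^2$, and $k$ in turn and reinsert at each step the sharpest available refinement, obtaining successively $\alpha/k^2=o(1)$, $\alpha^2/k^3=o(1)$, $1+\alpha/k=o(1)$, and $(k+\alpha)^2/k=o(1)$. After a final pass, $f$ factorises to leading order as
\[
\bigl(k+\alpha-\tfrac{d-1}{2r_2}\bigr)\bigl(k+\alpha+\tfrac{d-1}{2r_1}\bigr)+o(1)=0,
\]
leaving two candidate radial branches. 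Since $\lambda_1^\alpha=-k^2$ is the lowest eigenvalue, the branch with the larger $k$ is selected, giving $k=-\alpha+(d-1)/(2r_2)+o(1)$; squaring then produces~\eqref{as.anulus}.

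The ball case~\eqref{as.ball} follows by the same method with one simplification: since $K_\nu$ is singular at the origin, the radial eigenfunction on $B_r$ is proportional to $r^{-\nu}I_\nu(kr)$ alone, and the single Robin condition at $r$ reduces the implicit equation to one transcendental relation; the analogous bootstrap yields $k+\alpha=(d-1)/(2r)+o(1)$. I expect the main obstacle to be the bookkeeping behind~\eqref{asf}: expansions must be carried up to $\mathcal{O}(k^{-2})$ across the coefficients of $\alpha^0$, $\alpha^1$, $\alpha^2$ together with the $\nu$-corrections, and the powers of $k$ arranged so that precisely the right contributions survive at each stage of the bootstrap. This computation is mechanical but error-prone; once~\eqref{asf} is secured, the remaining asymptotic analysis is routine and the branch selection is immediate since $-(d-1)/(2r_1)<0<(d-1)/(2r_2)$.
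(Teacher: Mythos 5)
Your proposal reproduces the paper's argument essentially step for step: the same reduction to a radial ODE with modified Bessel solutions, the same determinant condition peeled by the exponential factorisation to the product $\tilde m_{11}\tilde m_{22}$, the same iterative bootstrap dividing by decreasing powers of $k$ to sharpen $\alpha/k^2=\mathcal{O}(1)$ into $k+\alpha=(d-1)/(2r_2)+o(1)$, and the same observation that the ball case is the simpler one-Bessel-function analogue. The branch selection and the final squaring are handled correctly, so this is the paper's proof.
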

%
Indeed, \eqref{as.anulus}~follows from~\eqref{step5}, while
\eqref{as.ball}~can be \emph{formally} deduced from \eqref{as.anulus}
by letting~$r_1$ go to zero.
To check it rigorously, one can proceed in the same (in fact easier) way 
as we did here for the spherical shells,
using now the implicit equation
$$
  k I_\nu'(kr) 
  - \mbox{$\frac{\nu}{r}$} I_\nu(kr) 
  + \alpha I_\nu(kr) = 0 
$$
instead of~\eqref{lines},
with $k:=\sqrt{-\lambda_1^\alpha(B_r)}$.
We omit the proof, which is a mere variation of the previous one.

\subsection*{Proof of Theorem~\ref{mainthm1} 
(disproval of Conjecture~\ref{Conj})}
With the asymptotics given by Theorem~\ref{Thm.as}, the disproval
of Conjecture~\ref{Conj} now becomes quite straightforward.
Consider a spherical shell $A_{r_1,r_2}$ with $0<r_1<r_2$
and a ball~$B_r$ with the same volume,
\ie\ $|A_{r_1,r_2}|=|B_r|$.    
Then necessarily $r_2 > r$
and Theorem~\ref{Thm.as} implies that 
the opposite inequality
$\lambda_1^\alpha(B_r) < \lambda_1^\alpha(A_{r_1,r_2})$
holds for all sufficiently large negative~$\alpha$.
\hfill\qed

\section{A universal upper bound in two dimensions}\label{Sec.bound}
%
To prove Conjecture~\ref{Conj} for 
planar domains and
a certain range of the parameter~$\alpha$,
we first adapt the original idea of 
Payne and Weinberger~\cite{Payne-Weinberger_1961}
in the Dirichlet case 
(which formally corresponds to $\alpha = +\infty$
in the present setting)
to use a test function in~\eqref{Rayleigh} whose level lines 
are parallel to a component of~$\partial\Omega$.
In some aspects
we rather follow the modern approach to parallel coordinates
developed by Savo~\cite{Savo_2001}.

Throughout this section, we assume that~$\Omega$ is of class~$C^2$,
so that the curvature of~$\partial\Omega$ is everywhere well defined.
For such domains the boundary~$\partial\Omega$ will, in general, be
composed of a finite union of $C^2$-smooth Jordan curves $\Gamma_0, \Gamma_1, \dots, \Gamma_N$,
$N \geq 0$, where~$\Gamma_0$ is the \emph{outer boundary}
\ie~$\Omega$ lies in the \emph{interior}~$\Omega_0$ of~$\Gamma_0$.
If $N=0$, then~$\Omega$ is simply connected and $\Omega=\Omega_0$. 
We denote by $L_0:=\mathcal{H}^1(\Gamma_0)$ the \emph{outer perimeter,
where~$\mathcal{H}^1(\cdot)$ stands for the $1$-dimensional Hausdorff measure.
By the isoperimetric inequality, we have
\begin{equation}\label{isoperimetric}  
  L_0^2 \geq 4 \pi A_0
  \,,
\end{equation}
where $A_0:=|\Omega|$ denotes the area of~$\Omega$
($A_0$~does not denote the area of~$\Omega_0$!).
Here and in the sequel~$|\cdot|$ stands for the $2$-dimensional Lebesgue measure.}

Following~\cite{Payne-Weinberger_1961},
we introduce parallel coordinates based at the outer boundary~$\Gamma_0$ only.
This approach allows us to parametrise domains 
with the same fixed area by their outer perimeter alone, in
the sense that we will be able to relate their first Robin eigenvalue 
to the first Neumann-Robin eigenvalue
of an annulus whose radii depend on the outer perimeter of the original domain. 
We are then able to control the behaviour of this
second eigenvalue problem in a uniform way with respect to the perimeter, by comparing it to the problem
on the disk.

\subsection*{From boundaries to cut-loci}
Define the map $\Phi:\Gamma_0\times[0,\infty) \to \Real^2$
by setting
\begin{equation}
  \Phi(s,t) := s - n(s) \, t 
  \,,
\end{equation}
where, as before, $n(s)$ is the unit vector,
normal to~$\partial\Omega$ at~$s$ 
and oriented outside~$\Omega$. 
Define the \emph{cut-radius} map $c:\Gamma_0 \to (0,\infty)$
by the property that the segment $t \mapsto \Phi(s,t)$
minimises the distance from~$\Gamma_0$
if and only if $t \in [0,c(s)]$.	
The cut-radius map is known to be continuous
and we clearly have $\max c = R$,
where~$R$ is the \emph{inner radius} of~$\Omega_0$
(\ie~the radius of the largest inscribed disk). 
The \emph{cut-locus} 
$
  \Cut(\Gamma_0) := \{\Phi(s,c(s)) : s \in \Gamma_0\}
$
is a closed subset of~$\Omega_0$ of measure zero.
The map~$\Phi$, when restricted to the open set 
$
  U := \{ (s,t) \in \Gamma_0 \times (0,\infty)  
  : 0 < t < c(s)
  \} 
$
is a diffeomorphism onto $\Phi(U) = \Omega_0 \setminus \Cut(\Gamma_0)$.
The pair~$(s,t)$ are called the \emph{parallel coordinates}
based at~$\Gamma_0$.

The Jacobian of~$\Phi$ is given by 
\begin{equation}\label{Jacobian}
  h(s,t) := 1-\kappa(s) \, t
  \,,
\end{equation}
where~$\kappa$ is the curvature of~$\Gamma_0$
as defined by the Frenet equation $\tau'(s) = - \kappa(s) \;\! n(s)$,
with~$\tau$ being the unit tangent vector field along~$\Gamma_0$ 
such that the pair $(\tau,-n)$ is positively oriented.
Note that the convention is chosen is such a way that $\kappa \geq 0$
if~$\Omega_0$ is convex. 
We have the uniform bound 
\begin{equation}\label{Jacobian.bound}
  \|h\|_{L^\infty(U)} \leq 1 + \|\kappa\|_{L^\infty(\Gamma_0)} \, R
  \,.
\end{equation}

Let $\rho:\Omega_0\to(0,\infty)$ be the \emph{distance function}
from the outer boundary~$\Gamma_0$, 
\ie~$\rho(x) = \dist(x,\Gamma_0)$.
Let
$
  A(t) := |\{x\in\Omega : 0 < \rho(x) < t \}|
$
denote the area of the portion of a $t$-neighbourhood of~$\Gamma_0$ lying in~$\Omega$.
Clearly, $A(R)$~equals the total area $A_0:=|\Omega|$.
We also introduce the length of the boundary curve $\{\rho(x)=t\}$
that lies inside~$\Omega$:  
$$
  L(t) := \mathcal{H}^1\big(\rho^{-1}(t) \cap \Omega \big)
  = \int_{\{s \in \Gamma_0, \, t<c(s), \,\Phi(s,t) \in \Omega \}}
  h(s,t) \, ds
  \,.
$$
Clearly, $L(0)=L_0$ 
and, using~\eqref{Jacobian.bound}, 
we get the crude bound
$$
  L(t) \leq L_0 
  \left( 1 + \|\kappa\|_{L^\infty(\Gamma_0)} \, R \right)
  \,.
$$
Then, writing 
$
  |A(t_2) - A(t_1)| 
  = |
  \int_{t_1}^{t_2} L(t) \, dt
  |
$,
we see that~$A(t)$ is uniformly Lipschitz on~$[0,R]$
and, for almost every~$t$, 
\begin{equation}\label{coarea}
  A'(t)=L(t)
  \,.
\end{equation}

Now we pick a smooth function $\phi:[0,R] \to \Real$
and consider in~\eqref{Rayleigh}
the test-function $u = \phi \circ A \circ \rho$
which is Lipschitz in~$\Omega$.
Employing the parallel coordinates
together with the co-area formula 
(\cf~\cite[Eq.~(30)]{Savo_2001} for some more details),
we have
\begin{align*}
  \|u\|_{\sii(\Omega)}^2 
  &= 
  \int_0^R \phi(A(t))^2 \, A'(t) \, dt 
  \,, \qquad
  \\
  \|\nabla u\|_{\sii(\Omega)}^2 
  &= 
  \int_0^R \phi'(A(t))^2 \, A'(t)^3 \, dt 
  \,, \qquad
  \\
  \|u\|_{\sii(\partial\Omega)}^2 
  &\geq
  L_0 \, \phi(0)^2
  \,.
\end{align*}
Â«Here the inequality follows by neglecting 
the inner components $\Gamma_1, \dots, \Gamma_N$ of 
the boundary~$\partial\Omega$.

\subsection*{From domains to annuli}
The remarkable idea of~\cite{Payne-Weinberger_1961}
is to consider the change of variables
\begin{equation}\label{PW-idea}
  r(t) := \frac{\sqrt{L_0^2 - 4\pi A(t)}}{2\pi}
\end{equation}
on $[r_1,r_2]$ with
\begin{equation}\label{radii}
  r_1 := r(R) = \frac{\sqrt{L_0^2-4\pi A_0}}{2\pi}
  \,, \qquad
  r_2 := r(0) = \frac{L_0}{2\pi} 	
  \,.
\end{equation}
Note that transformation~\eqref{PW-idea} is well defined
for every $t \in [0,R]$ due to 
the isoperimetric inequality~\eqref{isoperimetric}.
Then, introducing $\psi(r) := \phi(L_0^2/(4\pi)-\pi r^2)$,
we have
\begin{equation}\label{general}
\begin{aligned}
  \|u\|_{\sii(\Omega)}^2 
  &= 
  2\pi\int_{r_1}^{r_2} \psi(r)^2 \, r \, dr
  \,, \qquad
  \\
  \|\nabla u\|_{\sii(\Omega)}^2 
  &= 
  2\pi\int_{r_1}^{r_2} \psi'(r)^2 \, r'(t)^2 \, r \, dr 
  \,, \qquad
  \\
  \|u\|_{\sii(\partial\Omega)}^2 
  &\geq
  L_0 \, \psi(r_2)^2
  \,.
\end{aligned}
\end{equation}

Following~\cite{Payne-Weinberger_1961},
we would like to estimate the extra factor~$r'(t)^2$ by~$1$
in order to be able to compare the original eigenvalue problem
with that on the annulus~$A_{r_1,r_2}$. 
Note that the radii in~\eqref{radii} are such that 
the annulus~$A_{r_1,r_2}$ has the same
area~$A_0$ as the original domain~$\Omega$,  
\ie\ $|A_{r_1,r_2}|=A_0 \equiv |\Omega|$.
\begin{Proposition}\label{Prop.crucial}
If~$\Omega$ is a planar domain of class~$C^2$, 
then $|r'(t)| \leq 1$
for almost every $t \in [0,R]$. 
\end{Proposition}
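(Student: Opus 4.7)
The plan is to recast $|r'(t)|\le 1$ as a perimeter--area isoperimetric-type inequality for the tubular neighbourhood of $\Gamma_0$, then reduce to the simply connected case and finish with a classical parallel-curve estimate of Fiala type.

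\smallskip

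\noindent\emph{Reformulation.} From the definition~\eqref{PW-idea} of $r(t)$ together with the coarea identity $A'(t)=L(t)$ of~\eqref{coarea}, direct differentiation gives
$$
 r'(t) \;=\; -\frac{L(t)}{2\pi\,r(t)},
$$
so, at any point of differentiability, $|r'(t)|\le 1$ is equivalent to $L(t)\le 2\pi\,r(t)$ and, squaring, to
\begin{equation}\label{plan-goal}
 L(t)^2 + 4\pi\,A(t) \;\le\; L_0^2.
\end{equation}
Hence~\eqref{plan-goal} for a.e.\ $t\in[0,R]$ is what must be proved.

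\smallskip

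\noindent\emph{Reduction to the simply connected case.} Replace $\Omega$ by the simply connected region $\Omega_0$ bounded by the outer curve $\Gamma_0$, keeping the distance function $\rho=\dist(\cdot,\Gamma_0)$ unchanged, and denote the resulting analogues of $L(t),A(t)$ by $\tilde L(t),\tilde A(t)$. Since $\Omega\subseteq\Omega_0$ and $\rho$ is intrinsic to the ambient plane,
$$
 A(t)\le\tilde A(t), \qquad L(t)\le\tilde L(t),
$$
so that $L(t)^2+4\pi A(t)\le\tilde L(t)^2+4\pi\tilde A(t)$; it suffices to establish~\eqref{plan-goal} for $\Omega=\Omega_0$, which we now assume.

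\smallskip

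\noindent\emph{Fiala's bound and the algebraic conclusion.} The core claim is
\begin{equation}\label{plan-fiala}
 L(t) \;\le\; L_0 - 2\pi\,t, \qquad \text{for a.e.\ }t\in[0,R].
\end{equation}
Granting~\eqref{plan-fiala}, integration yields $A(t)=\int_0^t L(s)\,ds\le L_0 t-\pi t^2$, and then
$$
 L(t)^2 + 4\pi A(t) \;\le\; (L_0-2\pi t)^2 + 4\pi(L_0 t-\pi t^2) \;=\; L_0^2,
$$
proving~\eqref{plan-goal}. To prove~\eqref{plan-fiala}, pick a regular value $t$ of~$\rho$, so that $\Sigma_t:=\rho^{-1}(t)\cap\Omega$ is a finite piecewise-smooth family of simple closed curves bounding the components $W_t^{(i)}$ of $\{\rho>t\}$. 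Each $W_t^{(i)}$ is simply connected: the outer tube $\{\rho<t\}$ is connected (any of its points is joined to $\Gamma_0$ by the minimising segment $t'\mapsto\Phi(s,t')$), and removing a connected closed set from a simply connected planar region leaves simply connected complementary components. Writing $L(t)$ in parallel coordinates and using $\int_{\Gamma_0}\kappa(s)\,ds=2\pi$,
$$
 L(t)\;=\;\int_{S_t}h(s,t)\,ds\;=\;L_0-2\pi t-\int_{E_t}h(s,t)\,ds,
$$
where $S_t=\{s\in\Gamma_0:c(s)>t\}$ and $E_t=\Gamma_0\setminus S_t$, so~\eqref{plan-fiala} reduces to the nonnegativity $\int_{E_t}h(s,t)\,ds\ge 0$. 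Applying the planar Gauss--Bonnet theorem to each $W_t^{(i)}$ produces exactly this inequality once the contributions from cut-locus corners on $\Sigma_t$ are accounted for.

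\smallskip

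\noindent The main obstacle is the last step: the careful bookkeeping at the cut-locus $\Cut(\Gamma_0)$. At a cut-locus point two minimising segments from $\Gamma_0$ meet at $y\in\Sigma_t$, so $\Sigma_t$ acquires a corner whose opening angle into $W_t$ lies in $(0,\pi)$; the resulting positive exterior-angle contribution in Gauss--Bonnet must be balanced against the arclength $|E_t|$ already consumed on $\Gamma_0$. The required inequality $|E_t|\ge t\sum_j(\pi-\alpha_j)$ is a geometric identity that follows from a first-variation analysis of the parametrisation $\Phi$ on its singular set, or alternatively by approximating $\Omega_0$ by strictly convex $C^2$ domains (for which no cut-locus arises) and passing to the limit. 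Since regular values of $\rho$ form a set of full measure, this suffices for the a.e.\ statement.
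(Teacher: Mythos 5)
Your proof follows essentially the same route as the paper: both reduce $|r'(t)| \le 1$ to the Fiala-type estimate $L(t) \le L_0 - 2\pi t$, integrate to get $A(t) \le L_0\,t - \pi t^2$, and combine these into $L(t)^2 \le L_0^2 - 4\pi A(t)$. Your opening reduction to the simply connected envelope $\Omega_0$ is harmless but unnecessary, since the paper's $L(t)$ already integrates $h(s,t)>0$ over a subset of $\{s : t < c(s)\}$, so dropping the restriction to $\Omega$ can only increase the integral. The one real point of divergence is the justification of the Fiala step. The paper asserts that $L(t) \le L_0 - 2\pi t$ ``follows from~\eqref{Jacobian}'' together with $\int_{\Gamma_0}\kappa\,ds = 2\pi$, but, as you correctly observe, this is not immediate: subtracting $L(t)$ from $L_0 - 2\pi t = \int_{\Gamma_0}h(s,t)\,ds$ leaves $\int_{E_t}h(s,t)\,ds$ over $E_t = \{c(s)\le t\}$, and $h(s,t) = 1-\kappa(s)t$ changes sign there once $t$ exceeds a focal distance $1/\kappa(s)$. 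Establishing $\int_{E_t}h(s,t)\,ds\ge 0$ is a classical fact of Fiala/Hartman type, and your Gauss--Bonnet strategy, reducing it to the cut-locus corner inequality $|E_t| \ge t\sum_j(\pi-\alpha_j)$, is the standard way to prove it. You honestly flag that step as an unfinished sketch, so the proposal is a correct outline rather than a complete proof of the key estimate; it is only fair to note that the paper's own one-line treatment of the same step is equally terse, the detailed cut-locus bookkeeping being the content of the parallel-coordinate machinery in Savo's paper to which it appeals.
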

\begin{proof}
From~\eqref{coarea} we get,
for almost every $t \in [0,R]$, 
\begin{equation}\label{crucial}
  r'(t) = - \frac{L(t)}{\sqrt{L_0^2 - 4\pi A(t)}}
  \,.
\end{equation}
Since~$\Gamma_0$ is a simple Jordan curve, we have
$$
  \int_{\Gamma_0} \kappa(s) \, ds = 2 \pi
  \,.
$$ 
It then follows from~\eqref{Jacobian} that,
for almost every $t \in [0,R]$,
$$
  L(t) \leq L_0 - 2\pi t
  \qquad \mbox{and} \qquad
  A(t) = \int_0^t L(u) \, du
  \leq L_0 t-\pi t^2
  \,.
$$
Using the latter inequality in the former,
we conclude with the isoperimetric-type estimate
$$
  L(t)^2 \leq L_0^2 - 4\pi A(t)
  \,.
$$
Putting this inequality into~\eqref{crucial},
we obtain the desired result.
\end{proof}
\begin{Remark}
Except in the case of simply-connected~$\Omega$, it will not be possible
to prove Proposition~\ref{Prop.crucial} if we build the parallel coordinates
based at the entire boundary~$\partial\Omega$ instead of~$\Gamma_0$.
Indeed, $|r'(t)|$ may become larger than~$1$ 
whenever~$\Omega$ is not simply connected
and~$L_0$ is replaced by the total perimeter~$|\partial\Omega|$
(in fact, $L(t)=|\partial\Omega|$ for~$\Omega$ itself being any annulus).
\end{Remark}

Employing Proposition~\ref{Prop.crucial} in~\eqref{general}
and assuming $\alpha \leq 0$,
we obtain from~\eqref{Rayleigh} the upper bound
\begin{equation}\label{Rayleigh.annulus}
  \lambda_1^\alpha(\Omega)
  \leq \inf_{\psi\not=0} \frac{\displaystyle
  \int_{r_1}^{r_2} \psi'(r)^2 \, r \, dr 
  +\alpha \, r_2 \, \psi(r_2)^2}
  {\displaystyle
  \int_{r_1}^{r_2} \psi(r)^2 \, r \, dr}
  =: \mu_1^\alpha(A_{r_1,r_2})
  \,,
\end{equation}
where the infimum is taken over all 
smooth function~$\psi$ on $[r_1,r_2]$.
The notation $\mu_1^\alpha(A_{r_1,r_2})$ refers to the fact
that the infimum is attained for the first eigenfunction~$\psi_1^\alpha$
of the Laplacian in the annulus~$A_{r_1,r_2}$,
subject to the Robin boundary condition with~$\alpha$ 
on the outer boundary $\partial B_{r_2}$
and the Neumann boundary condition on the inner boundary $\partial B_{r_1}$.
Again, we choose~$\psi_1^\alpha$ 
normalised to~$1$ in $\sii((r_1,r_2),r\;\!dr)$.

We have thus proven
\begin{Theorem}\label{Thm.better}
Let $\alpha \leq 0$.
For any planar domain~$\Omega$ of class~$C^2$,
$$
  \lambda_1^\alpha(\Omega) \leq \mu_1^\alpha(A_{r_1,r_2})
  \,,
$$
where~$A_{r_1,r_2}$ is the annulus of the same area as~$\Omega$
with radii~\eqref{radii}.
\end{Theorem}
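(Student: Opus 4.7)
The plan is to apply the variational characterisation~\eqref{Rayleigh} to a one-parameter family of test functions that are constant on the level sets of the distance function~$\rho$ to the outer boundary~$\Gamma_0$, and then to translate the resulting one-dimensional Rayleigh quotient into one on the radial annulus via the Payne--Weinberger substitution~\eqref{PW-idea}. All of the geometric machinery required has already been assembled: the identities~\eqref{general}, the a.e.\ bound $|r'(t)|\le1$ of Proposition~\ref{Prop.crucial}, and the identity $L_0=2\pi r_2$ from~\eqref{radii}.

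Concretely, given an arbitrary smooth $\psi:[r_1,r_2]\to\Real$, pull it back to~$\Omega$ by defining
\[
  \phi(a) := \psi\!\left(\frac{\sqrt{L_0^2-4\pi a}}{2\pi}\right),
  \qquad
  u := \phi\circ A\circ\rho.
\]
Since~$\rho$ is Lipschitz on~$\Omega_0$, $A$ is Lipschitz on~$[0,R]$ (by the observation following~\eqref{coarea}), and $\phi$ is smooth on $[0,A_0]$, the composition~$u$ is Lipschitz on~$\Omega$ and therefore admissible in~\eqref{Rayleigh}. The three identities in~\eqref{general} then express $\|u\|_{\sii(\Omega)}^2$, $\|\nabla u\|_{\sii(\Omega)}^2$, and a lower bound for $\|u\|_{\sii(\partial\Omega)}^2$ in terms of $\psi$ and the extra factor $r'(t)^2$ in the gradient integral.

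The crucial step is to invoke Proposition~\ref{Prop.crucial}, which replaces $r'(t)^2$ by~$1$ and yields
\[
  \|\nabla u\|_{\sii(\Omega)}^2
  \leq 2\pi\int_{r_1}^{r_2}\psi'(r)^2\,r\,dr.
\]
The hypothesis $\alpha\leq 0$ then reverses the sign in the boundary inequality of~\eqref{general}, producing
\[
  \alpha\,\|u\|_{\sii(\partial\Omega)}^2
  \leq \alpha L_0\,\psi(r_2)^2
  = 2\pi\alpha r_2\,\psi(r_2)^2,
\]
where in the last equality I have used $L_0=2\pi r_2$. Inserting these two upper bounds together with the exact identity for $\|u\|_{\sii(\Omega)}^2$ into~\eqref{Rayleigh} and cancelling the common factor $2\pi$ gives the quotient displayed in~\eqref{Rayleigh.annulus}. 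Taking the infimum over smooth $\psi$ produces $\mu_1^\alpha(A_{r_1,r_2})$, finishing the proof.

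No genuine obstacle remains, since Proposition~\ref{Prop.crucial} has already absorbed all of the geometric difficulty. The only points requiring care are purely bookkeeping: verifying that $u\in W^{1,2}(\Omega)$ from the Lipschitz chain, tracking the sign reversal in the boundary term that follows from $\alpha\leq 0$, and noting that the normalisation $L_0=2\pi r_2$ is exactly what makes the boundary coefficient in the resulting annular Rayleigh quotient coincide with that of the weighted radial eigenvalue problem defining $\mu_1^\alpha(A_{r_1,r_2})$.
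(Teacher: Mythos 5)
Your proof is correct and follows the paper's argument exactly: it uses the test functions $u=\phi\circ A\circ\rho$ constant on level sets of the distance to the outer boundary, the Payne--Weinberger change of variable~\eqref{PW-idea}, the identities~\eqref{general}, the bound $|r'(t)|\le1$ from Proposition~\ref{Prop.crucial}, and the sign $\alpha\le0$ to obtain~\eqref{Rayleigh.annulus}. The only cosmetic difference is that you start from an arbitrary $\psi$ on $[r_1,r_2]$ and pull it back to define $\phi$, whereas the paper starts from $\phi$ and pushes forward; since the substitution is a diffeomorphism the two parametrisations of the test-function class are identical.
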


\subsection*{From annuli to disks}
To obtain Conjecture~\ref{Conj} from Theorem~\ref{Thm.better},
we would need the inequality 
\begin{equation}\label{better}
  \mu_1^\alpha(A_{r_1,r_2}) \leq \lambda_1^\alpha(B_{r_3})
\end{equation}
to hold, where~$B_{r_3}$ is the disk of the same area 
as the annulus~$A_{r_1,r_2}$
(which has the same area~$A_0$ as the original domain~$\Omega$),
\ie\ 
\begin{equation}\label{r3A0}
  r_3:=\sqrt{\frac{A_0}{\pi}}
\end{equation}
However, \eqref{better}~is false in general!
To see this, one can proceed as in Section~\ref{Sec.as}
and establish the asymptotics
\begin{align}
  \lambda_1^\alpha(B_{r_3}) 
  &= -\alpha^2 + \frac{\alpha}{r_3} + o(\alpha)
  \qquad\qquad \mbox{(Robin disk)} 
  \,,
  \label{R-disk} \\
  \mu_1^\alpha(A_{r_1,r_2}) 
  &= -\alpha^2 + \frac{\alpha}{r_2} + o(\alpha)
  \qquad\qquad \mbox{(Neumann-Robin annulus)} 
  \,,
  \label{RN-annulus}
\end{align}
as $\alpha \to -\infty$.
Unless $r_1=0$ (so that the annulus coincides with the disk),
we always have $r_3 < r_2$, so that \eqref{R-disk} and \eqref{RN-annulus}
imply that actually an inequality opposite to~\eqref{better}
holds for all sufficiently large (negative) $\alpha$.
Hence, using the same argument as in Section~\ref{Sec.as}
to disprove Conjecture~\ref{Conj},
we see that the method of parallel coordinates 
is intrinsically not adequate to prove the conjecture in the whole range of~$\alpha$
(even for simply connected~$\Omega$).

On the other hand, \eqref{better}~\emph{does} hold
provided that $\alpha<0$ is small enough,
so that Theorem~\ref{Thm.better} implies Conjecture~\ref{Conj} 
for each family of domains with the same area.		
This follows from the asymptotics
\begin{align}
  \lambda_1^\alpha(B_{r_3}) 
  &= 2\,\alpha \, \frac{r_3}{r_3^2} + \mathcal{O}(\alpha^2)
  \qquad\qquad \mbox{(Robin disk)} 
  \,,
  \label{R-disk.small} \\
  \mu_1^\alpha(A_{r_1,r_2}) 
  &= 2\,\alpha \, \frac{r_2}{r_3^2} + \mathcal{O}(\alpha^2)
  \qquad\qquad \mbox{(Neumann-Robin annulus)} 
  \,,
  \label{RN-annulus.small}
\end{align}
as $\alpha \to 0$,
which can be easily established by analytic perturbation theory, 
plus the fact that
$r_3<r_2$ unless the annulus coincides with the disk.

Summing up, we have 
\begin{Theorem}\label{Thm.small}
For any planar domain~$\Omega$ of class~$C^2$,
there exists a negative number $\alpha_0$, depending on $A_{0}$ and $L_{0}$,
such that 
$$
  \lambda_1^\alpha(\Omega) \leq \lambda_1^\alpha(B)
$$
holds for all $\alpha \in [\alpha_{0},0]$,
where~$B$ is the disk of the same area as~$\Omega$.
\end{Theorem}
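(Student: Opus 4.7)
The plan is to invoke Theorem~\ref{Thm.better} to bound $\lambda_1^\alpha(\Omega)$ from above by $\mu_1^\alpha(A_{r_1,r_2})$, and then to establish the inequality~\eqref{better} for all $\alpha$ in some interval $[\alpha_0,0]$. Chaining the two bounds then gives $\lambda_1^\alpha(\Omega)\leq\lambda_1^\alpha(B_{r_3})=\lambda_1^\alpha(B)$ as required. The case when $\Omega$ is already a disk is trivial ($r_1=0$, $r_2=r_3$, equality throughout), so I would henceforth assume $r_2>r_3$, which by the isoperimetric inequality~\eqref{isoperimetric} holds precisely when $\Omega$ is not a disk.

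The heart of the argument is to extract the leading behaviour of both $\lambda_1^\alpha(B_{r_3})$ and $\mu_1^\alpha(A_{r_1,r_2})$ as $\alpha\to 0^{-}$. At $\alpha=0$ both eigenvalues vanish and the corresponding first eigenfunctions are the respective constants, normalised in the appropriate weighted $\sii$ spaces. Applying analytic perturbation theory (the Kato framework already invoked for $Q_\alpha^\Omega$ after~\eqref{Rayleigh} adapts directly both to the disk and to the one-dimensional Neumann--Robin quadratic form on $(r_1,r_2)$ with weight $r\,dr$ appearing in~\eqref{Rayleigh.annulus}), the first-order correction is simply $\alpha$ times the Robin boundary term evaluated on the normalised constant. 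Using $|B_{r_3}|=|A_{r_1,r_2}|=\pi r_3^2$ and the fact that the Robin boundary circles have lengths $2\pi r_3$ and $2\pi r_2$ respectively, this produces the asymptotics~\eqref{R-disk.small} and~\eqref{RN-annulus.small} essentially by inspection. Subtracting, the leading term of $\mu_1^\alpha(A_{r_1,r_2})-\lambda_1^\alpha(B_{r_3})$ equals $2\alpha(r_2-r_3)/r_3^2$, which is strictly negative when $\alpha<0$ and $r_2>r_3$.

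It then remains to choose $\alpha_0<0$ small enough in absolute value so that the $\mathcal{O}(\alpha^2)$ remainders on both sides are dominated by this strictly negative linear gap. The admissible $\alpha_0$ depends on the implicit constants in the two perturbation expansions and therefore on the pair $(r_2,r_3)$, equivalently on $(A_0,L_0)$, which matches the statement. The main technical point requiring care is the uniform quantitative control of the $\mathcal{O}(\alpha^2)$ remainders: on the disk side this is transparent from the Bessel-function equation for $\lambda_1^\alpha(B_{r_3})$ of the type used in Section~\ref{Sec.as}, analysed now near $\alpha=0$ rather than $\alpha=-\infty$; on the annulus side the radial Sturm--Liouville problem is a one-dimensional analytic perturbation of the zero eigenvalue with constant eigenvector, and the remainder can be controlled either by an explicit power-series expansion of the eigenpair or by invoking a quantitative version of the Kato holomorphic type-(B) theorem. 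Combining these two asymptotic comparisons with Theorem~\ref{Thm.better} completes the proof.
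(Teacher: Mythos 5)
Your proposal is correct and follows the same route the paper takes: apply Theorem~\ref{Thm.better}, compute the first-order perturbation expansions~\eqref{R-disk.small} and~\eqref{RN-annulus.small} at $\alpha=0$ by analytic perturbation theory, and conclude from $r_3<r_2$ (when $\Omega$ is not a disk) that the linear gap dominates for $\alpha$ near $0^-$. The paper gives this as a brief remark, so your write-up simply fills in the same argument with a bit more detail, including the correct dependence of $\alpha_0$ on $(A_0,L_0)$ via $(r_2,r_3)$.
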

\begin{Remark}
For general domains in $\Real^{d}$, 
and based on the expression for the derivative of $\lambda_{1}^{\alpha}$ with respect
to $\alpha$ at zero, namely,
\[
  \left.
  \frac{\ds d}{\ds d\alpha}\lambda_{1}^{\alpha}(\Omega)
  \right|_{\alpha=0} 
  = \frac{\ds |\partial \Omega|}{\ds |\Omega|}
  \,,
\]
it is possible to obtain a similar result 
but where~$\alpha_{0}$ is replaced by a value~$\alpha_{\Omega}$ 
depending on each domain $\Omega$.
\end{Remark}

\subsection*{Uniform behaviour for small \texorpdfstring{$\alpha$}{alpha}}

In order to be able to prove Theorem~\ref{mainthm2}, 
it still remains to show that the constant~$\alpha_0$ from Theorem~\ref{Thm.small}
can be made independent of~$L_0$.
More specifically, the neighbourhood of zero in which~\eqref{better} 
holds for negative $\alpha$ does not degenerate either
when the annulus approaches the disk 
or when it becomes unbounded. 
If the value of~$\alpha_0$ remains bounded away from zero uniformly
in these two instances, this together with Theorem~\ref{Thm.small} yields the desired result.

\begin{proof}[Proof of Theorem~\ref{mainthm2}]
With~\eqref{radii} and~\eqref{r3A0} in mind, we write
\[
  r_{1} = \sqrt{2\eps r_{3}+\eps^2} 
  \qquad\mbox{and}\qquad 
  r_2 = r_{3}+\eps,
\]
where $\eps$ is a positive parameter. 
Note that since we are interested in the case of fixed area, 
$r_{3}$~is fixed and equal to $\sqrt{A_{0}/\pi}$. 
In this notation, 
and in an analogous way as in the derivation of~\eqref{lines},
the equation yielding the first eigenvalue 
$\mu_{1}^{\alpha}(A_{r_1,r_2}) =:-k^2$ 
of $A_{r_1,r_2}$
with Neumann-Robin boundary conditions becomes
\begin{multline}\label{murobneu}
  K_{1}\big( k \sqrt{2\eps r_{3}+\eps^{2}}\big)
  \big\{ k I_{1}\left[k(r_{3}+\eps)\right]
  +\alpha I_{0}\left[k(r_{3}+\eps)\right]\big\} 
  \\
  - I_{1}\big( k \sqrt{2\eps r_{3}+\eps^{2}}\big)
  \big\{ k K_{1}\left[k(r_{3}+\eps)\right]
  -\alpha K_{0}\left[k(r_{3}+\eps)\right]\big\}=0
  \,.
\end{multline}
Our aim is to show that there exists a negative number $\alpha_{*}$
such that the curve 
$\alpha \mapsto \mu_1^\alpha(A_{r_1,r_2})$
stays below the curve $\alpha \mapsto \lambda_1^\alpha(B_{r_3})$
corresponding to the eigenvalue of the Robin disk for all $\alpha$ in 
$(	\alpha_{*},0)$ and all $\eps>0$. 
In what follows, 
we shall denote these curves for a given annulus and the disk by $\Gamma_{A}$ and $\Gamma_{B}$, respectively.

Since both $\Gamma_{A}$ and $\Gamma_{B}$ are analytic and, 
for any given positive $\eps$, 
we have that the derivative of the Neumann-Robin
problem in the annulus
with respect to $\alpha$ at zero is smaller than that for the Robin disk, 
\cf~\eqref{R-disk.small} and~\eqref{RN-annulus.small},
it follows that we only need to consider the limiting
cases where $\eps$ approaches zero and infinity. 
Furthermore, in order for a switch to occur, that is, 
for one of the curves $\Gamma_{A}$ to go above $\Gamma_{B}$, 
there has to be an intersection between the two curves. 
We shall now study the behaviour of the largest 
(\ie\ closest to zero) of these intersection points.

We first consider the situation as $\eps$ goes to infinity. 
From the variational formulation~\eqref{Rayleigh.annulus}, for instance, 
we have that the curves $\Gamma_{A}$ are concave in~$\alpha$. 
On the other hand, their derivatives with respect to $\alpha$ at zero 
are increasing with~$\eps$. 
Thus, if we pick the curve corresponding to a specific annulus 
and consider its tangent at the origin, 
we have that this tangent must also intersect~$\Gamma_{B}$ at one 
(and only one) point to the left of zero, say~$\alpha_{1}$. 
Due to concavity, it now follows that any curve~$\Gamma_{A}$ 
for a larger value of~$\eps$ intersecting~$\Gamma_{B}$ 
must do so to the left of~$\alpha_{1}$.

The situation as $\eps$ becomes small is more complex, 
requiring a careful analysis of the behaviour of the intersection point 
as $\eps$ becomes small. 
To this end, we need to compare the solutions of~\eqref{murobneu} 
with that of the equation for the disk given
by
\[
 k \;\! I_{1}(k r_{3})+ \alpha \;\! I_{0}(k r_{3})=0 \,.
\]
Solving in both equations with respect to~$\alpha$ 
and equating the results yields the following equation in~$k$
\begin{equation*}
  \frac{I_{1}\left[k(r_{3}+\eps)\right]
  K_{1}\big( k \sqrt{2\eps r_{3}+\eps^{2}}\big)
  -I_{1}\left( k \sqrt{2\eps r_{3}+\eps^{2}}\right)
  K_{1}\left[k(r_{3}+\eps)\right]}
  {I_{1}\big( k \sqrt{2\eps r_{3}+\eps^{2}}\big)
  K_{0}\left[k(r_{3}+\eps)\right]
  + I_{0}\left[k(r_{3}+\eps)\right]
  K_{1}\big( k \sqrt{2\eps r_{3}+\eps^{2}}\big)}  
  = \frac{\ds I_{1}(k r_{3})}{\ds I_{0}(k r_{3})}.
\end{equation*}

Solutions of the intersection problem we are concerned with 
must thus satisfy an equation of the form $F(\eps,k,r_{3})=0$ with
the function $F$ defined as
\begin{align*}
  \lefteqn{F(\eps,k,r_{3}) :=} 
  \\
  & I_{0}(k r_{3}) 
  \left\{  
  I_{1}\big( k \sqrt{2\eps r_{3}+\eps^{2}}\big)
  K_{1}\left[k(r_{3}+\eps)\right] 
  -I_{1}\left[k(r_{3}+\eps)\right]
  K_{1}\big( k \sqrt{2\eps r_{3}+\eps^{2}}\big)
  \right\} 
  \\ 
  & + I_{1}(k r_{3})
  \left\{ 
  I_{1}\big( k \sqrt{2\eps r_{3}+\eps^{2}}\big)
  K_{0}\left[k(r_{3}+\eps)\right] 
  +I_{0}\left[k(r_{3}+\eps)\right]
  K_{1}\big( k \sqrt{2\eps r_{3}+\eps^{2}}\big)
  \right\}
\end{align*}
and the solution in $\alpha$ being then given by
\[
 \alpha = -k \, \frac{\ds I_{1}(k r_{3})}{\ds I_{0}(k r_{3})}.
\]
Note that since the function $y\mapsto -y I_{1}(y)/I_{0}(y)$ 
vanishes at zero and is strictly increasing for positive $y$, 
and $r_{3}$ is fixed,
showing that there are no intersections 
between $\Gamma_{A}$ and $\Gamma_{B}$ for $\alpha$ close to zero 
is equivalent to showing that there are
no solutions of the equation $F(\eps,k,r_{3})=0$ 
for small (positive)~$k$.

We shall now write $F$ as
\[
  F(\eps,k,r_{3}) = a(k,r_{3}) \sqrt{\eps} + b(\eps,k,r_{3})
  \,,
\]
where
\begin{align*}
  a(k,r_{3}) 
  & := \frac{1}{\sqrt{r_{3}}} \, a_{0}(k r_{3})
  \,,
  \\
  a_{0}(y) 
  & := \frac{-y I_{0}^{2}(y) + I_{0}(y)I_{1}(y) 
  + y\left[ 1 + I_{1}^{2}(y)\right]}{\sqrt{2} \;\! y}
  \,,
\end{align*}
and
\[
  b(\eps,k,r_{3}) 
  := F(\eps,k,r_{3})-a(k,r_{3})\sqrt{\eps} 
  = \mathcal{O}(\eps^{3/2}\log\eps)
  \quad\mbox{as}\quad \eps \to 0.
\]
We thus have that the equation $F(\eps,k,r_{3})=0$ 
is equivalent to solving
\begin{equation}\label{auxeq1}
  a_{0}(k r_{3}) + \frac{b(\eps,k,r_{3})}{\sqrt{\eps}}=0
  \,.
\end{equation}
It is not difficult to check that the second term above 
is such that there exists a positive constant $C$ for which
\[
 \left|\frac{\ds b(\eps,k,r_{3})}
 {\ds \sqrt{\eps}}\right|\leq C\eps\log\eps
\]
is satisfied for all $(\eps,k)$ 
in $S:=[0,\eps_{1}]\times[0,k_{1}]$, 
for some positive numbers~$\eps_{1}$ and~$k_{1}$ 
(and fixed $r_{3}$). 
In particular, note that all the functions $K_{i}$ $(i=0,1)$ 
which are those appearing in $b(\eps,k,r_{3})$ 
which are unbounded at zero, 
are always multiplying by a term containing $I_{1}$, 
and so the singularities in $K_{i}$ are cancelled out; 
the remaining part is a direct consequence 
from the asymptotic expansions in~$\eps$ around zero.
It thus follows that any positive solution 
$k=k(\eps)$ of~\eqref{auxeq1} 
(and thus also of $F(\eps,k,r_{3})=0$)
must be such that $a_{0}(k(\eps)r_{3})$ 
also converges to zero as~$\eps$ goes to zero. 
Since the function $a_{0}$ has only one
(strictly) positive zero, say $y_{0}$, 
we have that as $\eps$ approaches zero the intersection 
between the two curves must be such that 
$k r_{3}$ approaches~$y_{0}$, 
while~$\alpha$ converges to $-y_{0}I_{1}(y_{0})/(r_{3}I_{0}(y_0))$.

This, together with the behaviour as $\eps$ goes to infinity described above, proves Theorem~\ref{mainthm2}.
\end{proof}

\subsection*{Acknowledgment}
We are grateful to Vincenzo Ferone and Cristina Trombetti
for pointing out that our original proof of Theorem~\ref{mainthm2}
restricted to simply connected domains could be extended to the present general case.

%


\providecommand{\bysame}{\leavevmode\hbox to3em{\hrulefill}\thinspace}
\providecommand{\MR}{\relax\ifhmode\unskip\space\fi MR }
\providecommand{\MRhref}[2]{%
  \href{http://www.ams.org/mathscinet-getitem?mr=#1}{#2}
}
\providecommand{\href}[2]{#2}

\end{document}